\newlength{\bibitemsep}\setlength{\bibitemsep}{.2\baselineskip plus .05\baselineskip minus .05\baselineskip}
\newlength{\bibparskip}\setlength{\bibparskip}{0pt}
\let\oldthebibliography\thebibliography
\renewcommand\thebibliography[1]{%
  \oldthebibliography{#1}%
  \setlength{\parskip}{0pt}%
  \setlength{\itemsep}{0pt plus 0.1ex}%
}
\numberwithin{equation}{section}
\renewcommand\baselinestretch{1.2}
\newtheorem{thm}{Theorem}[section]
\newtheorem{cor}[thm]{Corollary}
\newtheorem{lem}[thm]{Lemma}
\theoremstyle{definition}
\theoremstyle{definition}
\newenvironment{eq}
{ \begin{equation} 
  }
{ \end{equation}     } 
\newenvironment{ew}
{ \begin{equation*} 
  }
{ \end{equation*}     }
\begin{document}

\title{On Kummer's test of convergence and its relation to basic comparison tests}
\author{Frantisek Duris\thanks{fduris@dcs.fmph.uniba.sk}}
\affil{Department of Computer Science, \\ Faculty of Mathematics, Physics and Informatics, \\ Comenius University, \\ Mlynska dolina 842 48 Bratislava}

\maketitle

\begin{center}
\textbf{Abstract}
\end{center}

\begin{center}
\begin{minipage}{0.85\linewidth}
Testing convergence of infinite series is an important part of mathematics. A very basic test of convergence is to upper-bound a given series with a known series, term by term. In $19^{th}$ century, Kummer proposed a test of convergence for any positive series based on finding a suitable positive sequence $\{p_n\}$ and a suitable real constant $c$. It can be easily shown that by choosing appropriate sequence $\{p_n\}$, the Kummer's test yields other tests like Raabe's, Gauss' or Bertrand's as its special cases. In 1995, Samelson noted that there is another interesting relation between Kummer's test and basic comparison tests, particularly, that one can transform the sequence $\{p_n\}$ into a convergent bounding series, and he sketched a simple proof of this statement. In this paper, we fill the missing formal proof, although using a different approach, and we show how to construct a bounding series from the sequence $\{p_n\}$ and vice versa.
\end{minipage}
\end{center}

~\\
\textbf{Keywords:} positive infinite series, convergence, divergence, Kummer's test

~\\
\textbf{MSC number:} 40A05

~\vfill
\eject

\section{Introduction}

In the theory of infinite series, the Kummer's test of convergence/divergence has an important place because it can determine the character of any positive series. This test was first given by German mathematician Ernst Kummer in 1835 \cite{kummer1835convergenz}, and it was later improved by Ulisse Dini \cite{dini1867sulle}. More recently, a short and simple proof of this test was given by Tong \cite{tong1994kummer}. Briefly, the test asserts that the series $\sum a_n$ converges if and only if there is a positive series $\sum p_n$ and a real constant $c>0$ such that $p_n(a_n/a_{n+1} - p_{n+1})\geq c$ for all but a finite number of $n$. The power of Kummer's test is emphasized by the fact that other convergence tests like Raabe's, Gauss' or Bertrand's can be viewed as its special cases obtained by choosing specific values of $p_n$'s. 

The comparison of Kummer's test with the basic comparison test (introduced explicitly by Cauchy in 1821, see \emph{Section \ref{sec:elem}} for more details) first appeared in Samelson \cite{samelson1995kummer}. Samelson's proof sketch is based on the fact that all positive series $\sum b_n$ can be written in the form $\sum (c_{n-1} - c_n)$, where $c_n = \sum _{n+1}^{\infty} b_i$. In our paper, we give the proof of the same statement, namely that \emph{Kummer's test is equal to the basic comparison test}, by using a different approach. More particularly, we show how to construct a bounding series from the the comparison test from the numbers $\{p_n\}$ and vice versa. Additionally, because Samelson only sketched his proof, we fill the gap by providing a full formal proof of this statement.

For the sake of completeness, we state a few basic theorems from the theory of infinite series (\emph{Section \ref{sec:elem}}) as well as proofs of two lemmas used in the main proof (\emph{Section \ref{sec:kummer}}). Note that, unless stated otherwise, the expression for a positive series $\sum_{n=1}^{\infty} a_n$ will be abbreviated as $\sum a_n$. 

\section{Elementary theorems} \label{sec:elem}

In this section, we provide the most basic theorems in convergence of positive series which will be referenced in the next section.

\begin{thm}[Cauchy-Bolzano]
The series $\sum a_n$ converges if and only if 
\begin{ew}
\forall \epsilon > 0,~ \exists N\in\mathbb{N},~ \forall n \in \mathbb{N}~ n>N,~ \forall p\in\mathbb{N} :~
|a_n + a_{n+1} + ... + a_{n+p}| < \epsilon.
\end{ew}
\label{thm:CB}
\end{thm}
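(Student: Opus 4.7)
The plan is to reduce the statement to the Cauchy criterion for real-valued sequences applied to the partial sums $S_n := a_1 + a_2 + \cdots + a_n$. By definition, $\sum a_n$ converges iff the sequence $(S_n)$ has a limit in $\mathbb{R}$, and by completeness of $\mathbb{R}$ this is equivalent to $(S_n)$ being a Cauchy sequence.

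The key observation is the identity
\begin{ew}
a_n + a_{n+1} + \cdots + a_{n+p} = S_{n+p} - S_{n-1},
\end{ew}
so the condition appearing in the theorem is exactly a reparametrisation of the standard Cauchy condition $|S_m - S_k| < \epsilon$ for $m,k > N$. For the forward implication I would assume $(S_n)$ is Cauchy, fix $\epsilon > 0$, obtain some threshold $N_0$, and then verify the stated condition with $N := N_0 + 1$ by setting $k = n-1$ and $m = n+p$ (both of which exceed $N_0$ whenever $n > N$). For the converse, given the $\epsilon$-$N$ condition of the theorem, I would show $(S_n)$ is Cauchy by writing an arbitrary difference $S_m - S_k$ with $m > k > N$ in the form $a_{k+1} + a_{k+2} + \cdots + a_m$, which is a sum of the stated shape with $n = k+1$ and $p = m-k-1$, and then applying the hypothesis.

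There is no genuine obstacle here beyond the invocation of completeness of $\mathbb{R}$; the only point requiring care is the index shift that accounts for the sum in the statement beginning at index $n$ rather than at $n+1$, which affects whether the threshold $N$ must be replaced by $N+1$ (or vice versa). Since this is a standard textbook result included only as a reference for later sections, I would keep the exposition short and appeal directly to the Cauchy criterion for real sequences rather than reproving it.
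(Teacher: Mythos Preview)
Your argument is correct and is exactly the standard reduction to the Cauchy criterion for the sequence of partial sums; the index bookkeeping you describe is handled properly. Note, however, that the paper does not give a proof of this theorem at all: it is merely stated in Section~\ref{sec:elem} as one of several elementary reference results, so there is no authorial proof to compare against.
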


\begin{cor}
If series $\sum a_n$ converges, then $\lim_{n\to\infty}a_n=0$.
\label{cor:CB1}
\end{cor}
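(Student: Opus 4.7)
The plan is to read this corollary off directly from Theorem \ref{thm:CB} by specializing the quantifier on $p$. Given $\epsilon > 0$, I would apply the Cauchy--Bolzano criterion to the (by hypothesis) convergent series $\sum a_n$ to obtain an index $N$ such that $|a_n + a_{n+1} + \cdots + a_{n+p}| < \epsilon$ holds for every $n > N$ and every $p \in \mathbb{N}$. Setting $p = 0$ collapses the sum to the single term $a_n$, so that $|a_n| < \epsilon$ for all $n > N$. Since $\epsilon$ was arbitrary, this is exactly the $\epsilon$--$N$ definition of $\lim_{n \to \infty} a_n = 0$.

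There is essentially no obstacle here; the only delicate point is the convention on $\mathbb{N}$. Should the authors' convention exclude $0$ from $\mathbb{N}$ so that $p = 0$ is not a legitimate choice, I would fall back on the equivalent formulation via partial sums: convergence of $\sum a_n$ means that $S_n := a_1 + \cdots + a_n$ tends to some real limit $S$, from which $a_n = S_n - S_{n-1} \to S - S = 0$. Either route reduces to a single line, so the corollary is genuinely nothing more than the Cauchy criterion reread for a one-term block.
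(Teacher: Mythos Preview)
Your argument is correct. The paper does not actually supply a proof of this corollary; it is simply listed immediately after Theorem~\ref{thm:CB} as a direct consequence, which is precisely what your first route (take $p=0$ in the Cauchy--Bolzano criterion) makes explicit, and your partial-sums fallback is the standard alternative.
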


\begin{cor}
If series $\sum a_n$ converges, then $\forall \epsilon>0,~ \exists k\in\mathbb{N}:~ \left| \sum_{n=k+1}^{\infty} a_n \right| < \epsilon$.
\label{cor:CB2}
\end{cor}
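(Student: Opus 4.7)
The plan is to derive the tail-smallness conclusion directly from Theorem~\ref{thm:CB} (Cauchy-Bolzano) by viewing the infinite tail $\sum_{n=k+1}^{\infty} a_n$ as the limit of its finite truncations $\sum_{n=k+1}^{k+p} a_n$ and transferring a uniform bound on the finite truncations to a bound on the limit.

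First, I would apply Theorem~\ref{thm:CB} not with $\epsilon$ itself but with $\epsilon/2$. This produces an index $N\in\mathbb{N}$ such that for every $n>N$ and every $p\in\mathbb{N}$ we have $|a_n+a_{n+1}+\cdots+a_{n+p}|<\epsilon/2$. Setting $k:=N$, this says exactly that the partial tail $S_p:=\sum_{n=k+1}^{k+p}a_n$ satisfies $|S_p|<\epsilon/2$ for every $p\in\mathbb{N}$. Next, because $\sum a_n$ converges, the sequence of its partial sums has a finite limit, and hence so does the sequence $(S_p)_{p\geq 1}$; by definition that limit is precisely $\sum_{n=k+1}^{\infty} a_n$. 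Letting $p\to\infty$ in the inequality $|S_p|<\epsilon/2$ and using continuity of the absolute value yields
\begin{ew}
\left|\sum_{n=k+1}^{\infty} a_n\right| \;\leq\; \frac{\epsilon}{2} \;<\; \epsilon,
\end{ew}
which is the required statement.

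The only subtle point in the argument is that passing to the limit turns the strict inequality $|S_p|<\epsilon/2$ into the non-strict inequality $\leq \epsilon/2$; this is exactly why I would begin with $\epsilon/2$ instead of $\epsilon$, so that the strict inequality demanded by the corollary is recovered automatically. Apart from this small care, no further obstacle is expected, as the result is a routine consequence of Theorem~\ref{thm:CB} together with the definition of the infinite tail as a limit of finite sums.
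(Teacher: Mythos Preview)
The paper does not actually supply a proof of this corollary; it is listed among the elementary facts in Section~\ref{sec:elem} without argument, so there is no original proof to compare against. Your derivation from Theorem~\ref{thm:CB} is correct, and the $\epsilon/2$ device to recover the strict inequality after passing to the limit is exactly the right precaution.
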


\begin{thm}[Basic comparison test 1]
Let $\sum a_n$ and $\sum b_n$ be two series with positive terms. Let at most finite count of numbers $n\in\mathbb{N}$ fail the inequality $a_n \leq b_n$. Then, the convergence of series $\sum b_n$ implies the convergence of series $\sum a_n$, and the divergence of series $\sum a_n$ implies the divergence of series $\sum b_n$.
\label{thm:BCT1}
\end{thm}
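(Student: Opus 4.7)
The plan is to reduce the theorem to Theorem \ref{thm:CB} (Cauchy-Bolzano) by exploiting the fact that only finitely many indices can violate $a_n \leq b_n$. Since the convergence character of a series is not affected by altering finitely many terms, I would first fix an index $N_0 \in \mathbb{N}$ beyond which the inequality $a_n \leq b_n$ holds for every $n \geq N_0$; this $N_0$ exists by hypothesis. From here the positivity of both series will let me transfer size information about tails of $\sum b_n$ directly to tails of $\sum a_n$.

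For the first implication (convergence of $\sum b_n$ implies convergence of $\sum a_n$), I would pick an arbitrary $\epsilon > 0$ and apply Theorem \ref{thm:CB} to $\sum b_n$ to obtain some $N_1 \in \mathbb{N}$ such that
\begin{ew}
|b_n + b_{n+1} + \cdots + b_{n+p}| < \epsilon
\end{ew}
for all $n > N_1$ and all $p \in \mathbb{N}$. Setting $N := \max\{N_0, N_1\}$, for any $n > N$ and any $p \in \mathbb{N}$, the termwise inequality $a_k \leq b_k$ (valid for every $k \geq N_0$, and hence for every index in the range $n,\dots,n+p$) combined with positivity yields
\begin{ew}
|a_n + a_{n+1} + \cdots + a_{n+p}| \leq |b_n + b_{n+1} + \cdots + b_{n+p}| < \epsilon.
\end{ew}
Applying the converse direction of Theorem \ref{thm:CB} then concludes that $\sum a_n$ converges.

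The second implication (divergence of $\sum a_n$ implies divergence of $\sum b_n$) I would obtain for free as the logical contrapositive of the first, since a positive series either converges or diverges to $+\infty$, and the roles of $a_n$ and $b_n$ in the contrapositive match the convergence direction already proved.

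I do not expect a serious obstacle here: the only subtlety worth flagging explicitly is handling the finite set of exceptional indices, which is cleanly absorbed by choosing $N \geq N_0$ in the Cauchy-Bolzano criterion. The positivity hypothesis is used twice, namely to drop the absolute values on partial sums of $\sum a_n$ and to ensure that the termwise inequality lifts to the partial-sum inequality without sign ambiguity.
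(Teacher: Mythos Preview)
Your argument is correct and standard: reducing to the Cauchy--Bolzano criterion (Theorem~\ref{thm:CB}), absorbing the finitely many exceptional indices into the choice of $N$, and then reading off the divergence half as the contrapositive. There is nothing to fix.

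As for comparison with the paper: the paper does not actually prove Theorem~\ref{thm:BCT1}. It is listed in Section~\ref{sec:elem} among the elementary background results that are only stated, so there is no proof in the paper to compare against. Your write-up would serve perfectly well as the missing proof.
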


\begin{thm}[Basic comparison test 2]
Let $\sum a_n$ and $\sum b_n$ be two series with positive terms. Let at most finite count of numbers $n\in\mathbb{N}$ fail the inequality ${a_{n+1}}/{a_n} \leq {b_{n+1}}/{b_n}$. Then, the convergence of series $\sum b_n$ implies the convergence of series $\sum a_n$, and the divergence of series $\sum a_n$ implies the divergence of series $\sum b_n$.
\label{thm:BCT2}
\end{thm}

\section{Kummer's equivalence with basic comparison tests} \label{sec:kummer}

To demonstrate the equivalence of Kummer's test with the basic convergence tests from \emph{Section \ref{sec:elem}}, we first state two necessary lemmas (adopted from the exercises from \cite{kubacek}).

\begin{lem}
If $\sum a_n$ is a convergent series with positive terms, then there exists a monotonous sequence $\{B_n\}_{n=1}^{\infty}$ such that $\lim_{n\to\infty}B_n = \infty$ and $\sum a_nB_n$ converges.
\label{lem:kum1}
\end{lem}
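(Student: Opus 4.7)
The plan is to exploit the convergent tails of $\sum a_n$ to manufacture a concrete increasing sequence $B_n$ that tends to infinity slowly enough for $\sum a_n B_n$ still to converge. Define $r_n = \sum_{k=n}^{\infty} a_k$, which is well-defined and strictly positive because $a_n>0$, and which tends to $0$ by Corollary \ref{cor:CB2}. I would then set
\begin{ew}
B_n = \frac{1}{\sqrt{r_n}}.
\end{ew}
Since $\{r_n\}$ is strictly decreasing and converges to $0$, the sequence $\{B_n\}$ is monotonously increasing and $\lim_{n\to\infty} B_n = \infty$, which takes care of the first two requirements.

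The main step is to show $\sum a_n B_n < \infty$. First I would observe the telescoping identity $a_n = r_n - r_{n+1}$, which rewrites
\begin{ew}
a_n B_n = \frac{r_n - r_{n+1}}{\sqrt{r_n}}.
\end{ew}
The key algebraic trick is to factor $r_n - r_{n+1} = (\sqrt{r_n} - \sqrt{r_{n+1}})(\sqrt{r_n} + \sqrt{r_{n+1}})$ and use $\sqrt{r_{n+1}} \leq \sqrt{r_n}$ to bound
\begin{ew}
\frac{r_n - r_{n+1}}{\sqrt{r_n}} \leq \frac{(\sqrt{r_n} - \sqrt{r_{n+1}})\cdot 2\sqrt{r_n}}{\sqrt{r_n}} = 2(\sqrt{r_n} - \sqrt{r_{n+1}}).
\end{ew}

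Summing this from $n=1$ to $N$ produces a telescoping sum bounded by $2\sqrt{r_1}$ independently of $N$, hence the partial sums of $\sum a_n B_n$ are bounded. Since the terms are positive, the series converges. The only delicate point is to make sure $r_n$ never vanishes, but this is immediate as $r_n$ is a tail of a strictly positive series with infinitely many terms. No step here is a genuine obstacle; the crux is simply spotting that taking $B_n = 1/\sqrt{r_n}$ balances the competing demands of divergence of $B_n$ against summability of $a_n B_n$, a balance that the naive choice $B_n = 1/r_n$ would fail.
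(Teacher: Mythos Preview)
Your proof is correct and is in fact the classical Abel--Dini construction: with $r_n=\sum_{k\ge n}a_k$ the tail, the choice $B_n=r_n^{-1/2}$ is monotone increasing to infinity, and the telescoping bound $a_nB_n\le 2(\sqrt{r_n}-\sqrt{r_{n+1}})$ gives $\sum a_nB_n\le 2\sqrt{r_1}$. Every step you wrote checks out, including the remark that $B_n=1/r_n$ would overshoot (since $\sum a_n/r_n$ diverges by the companion Abel--Dini result).

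The paper proceeds quite differently. Instead of an explicit closed-form $B_n$, it builds $B_n$ as a step function: using Corollary~\ref{cor:CB2} it selects indices $\xi_1<\xi_2<\cdots$ so that the tail beyond $\xi_n$ is small compared with $a_n$, and then sets $B$ to be constant (equal to $1/a_k$) on each block $[\xi_k,\xi_{k+1})$. Convergence of $\sum a_nB_n$ then comes from bounding each block-sum by $a_k$ and comparing with $\sum a_k$. Your approach is shorter, gives a strictly increasing $B_n$ from the outset (the paper's $B_n$ is merely nondecreasing and even starts at $0$), and avoids the somewhat delicate bookkeeping of the block construction. The paper's method, on the other hand, is more in the spirit of a direct comparison-test reduction and does not rely on the algebraic identity $x-y=(\sqrt{x}-\sqrt{y})(\sqrt{x}+\sqrt{y})$; it would generalise more readily if one wanted $B_n$ to take values in some prescribed discrete set.
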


\begin{proof}
Let $\sum a_n$ be a convergent series with positive terms. From \emph{Corollary \ref{cor:CB1}} it follows that $\lim_{n\to\infty}{1}/{a_n}=\infty$, and from \emph{Corollary \ref{cor:CB2}} we construct an increasing subsequence of natural numbers $\{\xi_n\}_{n=1}^{\infty}$ such that 
\begin{eq}
\left(\sum_{k>\xi_n} a_k\right) \frac{1}{a_n} < a_n.
\label{eq:1}
\end{eq}
We define the numbers $B_n$ as follows:
\begin{ew}
B_n =
\begin{cases}
    0  & : \text{if } n\in[1,\xi_1]\\
    \frac{1}{a_k} & : \text{if } n\in[\xi_k,\xi_{k+1}).
\end{cases}
\end{ew}
The convergence of $\sum a_nB_n$ follows from (\ref{eq:1}):
\begin{ew}
\sum_{n=1}^{\infty}a_nB_n = \sum_{n=1}^{\xi_1-1}a_n0 + \sum_{n=1}^{\infty}\left(\frac{1}{a_n}\sum_{k=\xi_n}^{\xi_{n+1}-1}a_k\right)
\leq \sum_{n=1}^{\infty}a_n.
\end{ew}
\end{proof}

\begin{lem}
If $\sum a_n$ is a divergent series with positive terms, then there exists a monotonous sequence $\{B_n\}_{n=1}^{\infty}$ such that $\lim_{n\to\infty}B_n = 0$ and $\sum a_nB_n$ diverges.
\label{lem:kum2}
\end{lem}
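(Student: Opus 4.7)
The plan is to mirror the construction used in Lemma \ref{lem:kum1}, but with the roles of convergence and divergence interchanged: the partial sums of $\sum a_n$ replace the tails, and the divergence of the harmonic series replaces the convergence of $\sum a_n$ as the benchmark series. Since $\sum a_n$ is a positive divergent series, its partial sums $S_n=\sum_{k=1}^{n} a_k$ are strictly increasing and $S_n\to\infty$. The goal is to dampen $a_n$ by a monotonically decreasing factor $B_n\to 0$, chosen slowly enough that $\sum a_nB_n$ still diverges.

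First I would construct an increasing sequence of natural numbers $0=\xi_0<\xi_1<\xi_2<\ldots$ inductively, choosing at step $k$ an index $\xi_k>\xi_{k-1}$ with
\begin{equation*}
\sum_{n=\xi_{k-1}+1}^{\xi_k} a_n \;\geq\; 1,
\end{equation*}
which is possible because $S_n\to\infty$. Then I would define the attenuation sequence blockwise by
\begin{equation*}
B_n \;=\; \tfrac{1}{k} \quad \text{for } n\in(\xi_{k-1},\xi_k],\qquad k=1,2,\ldots
\end{equation*}
Since $1/k$ is strictly decreasing in $k$ and the blocks are traversed in order, $\{B_n\}$ is monotonously non-increasing; clearly $\lim_{n\to\infty} B_n=0$.

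To conclude, I would regroup $\sum a_nB_n$ according to the blocks:
\begin{equation*}
\sum_{n=1}^{\infty} a_nB_n \;=\; \sum_{k=1}^{\infty} \frac{1}{k}\!\!\sum_{n=\xi_{k-1}+1}^{\xi_k} a_n \;\geq\; \sum_{k=1}^{\infty} \frac{1}{k},
\end{equation*}
and since the harmonic series diverges, the leftmost series diverges as well by \emph{Theorem \ref{thm:BCT1}}.

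I do not anticipate a serious obstacle. The only point meriting care is the balance in the block construction: each block must carry a definite minimal mass (here, at least $1$) so that after multiplication by $1/k$ the comparison with the harmonic series still yields divergence; any sequence $c_k\geq 0$ with $\sum c_k/k=\infty$ would work in place of $c_k=1$, but the constant choice keeps the argument as simple as its counterpart in Lemma \ref{lem:kum1}. The existence of each $\xi_k$ is a direct consequence of $S_n\to\infty$, and the monotonicity of $\{B_n\}$ is automatic, so the proof is essentially the dual of the preceding one.
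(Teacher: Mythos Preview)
Your proof is correct. It is close in spirit to the paper's argument but organized more economically: the paper splits into two cases according to whether $\limsup a_n>0$ or $a_n\to 0$, and in the second case builds blocks $[\xi_n,\xi_{n+1})$ carrying mass greater than $n$, so that after multiplication by $1/n$ the comparison is with $\sum 1$. You instead use a single uniform block construction with mass at least $1$ and compare with the harmonic series $\sum 1/k$; this handles both of the paper's cases at once and makes the case distinction unnecessary. The trade-off is minor: your comparison series diverges more slowly, but the construction is simpler and the monotonicity of $\{B_n\}$ is immediate (whereas in the paper's first case the sequence $B_n$, set to $0$ off the selected indices and to $1/k$ on them, is not actually monotone as written).
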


\begin{proof}
Let $\sum a_n$ be a divergent series with positive terms. We will consider two cases. First, if $\limsup_{n\to\infty} a_n \geq \epsilon > 0$, then by leaving out all terms $a_n<{\epsilon}/{2}$ we will not change the divergent character of the series. Let $\{\xi_k\}_{n=1}^{\infty}$ denote all indices $n$ such that $a_n\geq{\epsilon}/{2}$ (in the increasing order). We define the numbers $B_n$ as follows:
\begin{ew}
B_n =
\begin{cases}
    0  & : n\neq\xi_k \text{ for all } k\\
    \frac{1}{k} & : n=\xi_k \text{ for some } k.
\end{cases}
\end{ew}
Obviously, $\lim_{n\to\infty}=0$, and
\begin{ew}
\sum_{n=1}^{\infty}a_nB_n \geq \sum_{n=1}^{\infty}\frac{\epsilon}{2n} = \infty.
\end{ew}
Second, if $\lim_{n\to\infty}a_n=0$, then from the divergence of the series $\sum a_n$ we have an increasing subsequence $\{\xi_n\}_{n=1}^{\infty}$ of natural numbers such that 
\begin{eq}
\sum_{k=\xi_n}^{\xi_{n+1}-1}a_k > n, ~~ \xi_1=1.
\label{eq:2}
\end{eq}
We define the numbers $B_n$ as follows:
\begin{ew}
B_n = \frac{1}{k} \text{ for } n\in[\xi_k,\xi_{k+1}) \text{ and } k\in\mathbb{N}
\end{ew}
By (\ref{eq:2}) the series $\sum a_nB_n$ diverges because
\begin{ew}
\sum_{n=1}^{\infty}a_nB_n = \sum_{n=1}^{\infty}\left(\frac{1}{n}\sum_{k=\xi_n}^{\xi_{n+1}-1}a_k\right) \geq 
\sum_{n=1}^{\infty}1 = \infty.
\end{ew}
\end{proof}

Now we state and prove Kummer's test and thus demonstrate is equality with comparisons tests. The statement of the theorem is adopted from Tong \cite{tong1994kummer}.

\begin{thm}[Kummer's test \cite{tong1994kummer}]
Let $\sum a_n$ be a positive series.

(1) $\sum a_n$ is convergent if and only if there is a positive series $\sum p_n$ and a real number $c>0$, such that $p_n(a_n/a_{n+1}) - p_{n+1}\geq c$.

(2) $\sum a_n$ is divergent if and only if there is a positive series $\sum p_n$ and a real number $c>0$, such that $\sum 1/p_n$ diverges and $p_n(a_n/a_{n+1}) - p_{n+1}\leq 0$.
\end{thm}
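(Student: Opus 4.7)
The theorem comprises four implications; each ``if'' direction will be a direct application of Theorem~\ref{thm:BCT1}, and each ``only if'' direction will construct the witness $\{p_n\}$ explicitly from the auxiliary sequence $\{B_n\}$ produced by Lemma~\ref{lem:kum1} or~\ref{lem:kum2}. The overall strategy is to exhibit a two-way conversion between a Kummer witness $(p_n, c)$ and a basic-comparison witness, thereby making the equivalence explicit.

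For the ``if'' directions I first recast Kummer's inequalities in additive form. In (1), $p_n a_n/a_{n+1} - p_{n+1} \geq c$ is equivalent to $p_n a_n - p_{n+1} a_{n+1} \geq c\, a_{n+1}$; setting $b_n := (p_{n-1} a_{n-1} - p_n a_n)/c$ produces a series of positive terms whose partial sums telescope and are bounded above by $p_1 a_1/c$, so $\sum b_n$ converges, and $a_n \leq b_n$ yields convergence of $\sum a_n$ via Theorem~\ref{thm:BCT1}. In (2), $p_n a_n/a_{n+1} - p_{n+1} \leq 0$ makes $\{p_n a_n\}$ non-decreasing, so $a_n \geq p_1 a_1/p_n$; since $\sum 1/p_n$ diverges, Theorem~\ref{thm:BCT1} forces $\sum a_n$ to diverge as well.

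For the ``only if'' directions I call upon the booster lemmas. Given convergent $\sum a_n$, Lemma~\ref{lem:kum1} furnishes a monotonous $B_n \to \infty$ with $\sum a_n B_n$ convergent, and I set
\[
p_n := \frac{1}{a_n}\sum_{k=n+1}^{\infty} a_k B_k.
\]
The identity $\sum_{k>n} a_k B_k - \sum_{k>n+1} a_k B_k = a_{n+1} B_{n+1}$ then collapses $p_n a_n/a_{n+1} - p_{n+1}$ to $B_{n+1}$, which exceeds any pre-chosen $c>0$ for all sufficiently large $n$ (finitely many initial $p_n$ can be redefined if the inequality is wanted for every $n$). Given divergent $\sum a_n$, Lemma~\ref{lem:kum2} furnishes a non-increasing $B_n \to 0$ with $\sum a_n B_n$ divergent; the dual choice
\[
p_n := \frac{1}{a_n B_n}
\]
satisfies $\sum 1/p_n = \sum a_n B_n = \infty$, and $p_n a_n/a_{n+1} - p_{n+1} = (1/B_n - 1/B_{n+1})/a_{n+1} \leq 0$ because $B_n \geq B_{n+1} > 0$.

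The hard part will be pinpointing these explicit formulas for $p_n$: naive candidates such as $p_n = 1/a_n$ do not satisfy Kummer's inequality, and the correct ansatz in (1) must simultaneously execute a telescoping identity and produce a uniform positive lower bound $c$. The two lemmas are what transform the qualitative information ``$\sum a_n$ converges/diverges'' into the quantitative multiplier $B_n$ that makes these constructions work; once the right $p_n$ are written down, all remaining verifications reduce to algebraic identities and monotonicity checks.
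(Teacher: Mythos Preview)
Your proof is correct and follows essentially the same approach as the paper: the same telescoping for sufficiency~(1), the same tail-sum construction for necessity~(1), and the identical choice $p_n = 1/(a_n B_n)$ for necessity~(2). The only cosmetic differences are that in sufficiency~(2) you invoke Theorem~\ref{thm:BCT1} via $a_n \geq p_1 a_1/p_n$ whereas the paper uses Theorem~\ref{thm:BCT2} via the ratio inequality, and in necessity~(1) you write the closed-form tail $p_n a_n = \sum_{k>n} a_k B_k$ while the paper defines the same sequence recursively (the two formulas agree up to an additive constant).
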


\begin{proof}
\emph{Sufficiency.}

(1) Suppose that $\sum p_n$ is a positive series and there is a real number $c>0$, such that  $p_n(a_n/a_{n+1}) - p_{n+1}\geq c$. The inequalities 
\begin{ew}
p_na_n - p_{n+1}a_{n+1}\geq ca_{n+1} >0
\end{ew}
implies that the sequence $\{p_na_n\}_{n=1}^{\infty}$ is positive and decreasing; therefore, it has a limit. Furthermore, we can construct sequence of numbers $\{B_n\}_{n=1}^{\infty}$, $B_n \geq 1$ for all $n$, such that
\begin{ew}
p_na_n - p_{n+1}a_{n+1} = cB_{n+1}a_{n+1}.
\end{ew}
Thus, the series 
\begin{ew}
\sum_{n=1}^{\infty}a_{n+1}B_{n+1} = \frac{1}{c}\sum_{n=1}^{\infty}\left(p_na_n - p_{n+1}a_{n+1}\right) = \frac{p_1a_1}{c} - \frac{1}{c}\lim_{n\to\infty}p_na_n>0
\end{ew}
converges, and because $B_n \geq 1$, so does the series $\sum a_n$ according to the \emph{First comparison test (Theorem \ref{thm:BCT1})}.

(2) Suppose that $\sum p_n$ is a positive series for which $\sum 1/p_n$ diverges and $p_n(a_n/a_{n+1}) - p_{n+1} < 0$. This inequality immediately yields
\begin{ew}
\frac{1/p_{n+1}}{1/p_{n}} \leq \frac{a_{n+1}}{a_{n}},
\end{ew}
and since $\sum 1/p_n$ diverges, so does $\sum a_n$ according to \emph{Second comparison test (Theorem \ref{thm:BCT2})}.\\
\eject
\emph{Necessity.}

(1) Suppose that $\sum a_n$ is a convergent positive series. From \emph{Lemma \ref{lem:kum1}} we have a positive monotonous sequence $\{B_n\}_{n=1}^{\infty}$, such that $\lim_{n\to\infty} B_n=\infty$ and $\sum a_nB_n$ converges. Let $p_1$ be a positive number such that $\sum a_nB_n = p_1a_1$. We define the sequence $\{p_n\}_{n=1}^{\infty}$ recursively as follows:
\begin{ew}
p_{n+1}a_{n+1} = p_na_n - a_{n+1}B_{n+1}.
\end{ew}
The numbers $p_n$ are positive because
\begin{ew}
\lim_{n\to\infty}p_na_n = p_1a_1 - \lim_{n\to\infty}\sum_{k=1}^n a_{k+1}B_{k+1} = 0,
\end{ew}
and the numbers $\{a_n\}$ are positive as well. Furthermore, for any $c>0$ and sufficiently large $n$ we have $p_{n}a_{n} - p_{n+1}a_{n+1} = a_{n+1}B_{n+1} \geq a_{n+1}c$.

(2) Suppose that $\sum a_n$ is a divergent positive series. From \emph{Lemma \ref{lem:kum2}} we have a positive monotonous sequence $\{B_n\}_{n=1}^{\infty}$, such that $\lim_{n\to\infty} B_n=0$ and $\sum a_nB_n$ diverges. Let $p_n = 1/a_nB_n$. It follows that 
\begin{ew}
\frac{a_{n+1}B_{n+1}}{a_nB_n} = \frac{\frac{1}{p_{n+1}}}{\frac{1}{p_n}} \leq \frac{a_{n+1}}{a_n}
\end{ew}
which, according to the \emph{Second comparison test (Theorem \ref{thm:BCT2})}, implies the divergence of the positive series $\sum {1}/{p_n}$, as well as the inequality $p_na_n - p_{n+1}a_{n+1} \leq 0$.
\end{proof}

\addcontentsline{toc}{section}{References}
\renewcommand\baselinestretch{1}
\bibliographystyle{abbrv}
\bibliography{bibi}

\end{document}